\documentclass[ pdflatex]{sn-jnl}

\usepackage{amsmath,amssymb,amsthm}

\newtheorem{theorem}{Theorem}

\newtheorem{corollary}[theorem]{Corollary}
\begin{document}

\title{A Note on Carmichael Numbers of the form $ N=(d+1)(3d+1)(4d+1)$.}

%%=============================================================%%
%% Prefix	-> \pfx{Dr}
%% GivenName	-> \fnm{Joergen W.}
%% Particle	-> \spfx{van der} -> surname prefix
%% FamilyName	-> \sur{Ploeg}
%% Suffix	-> \sfx{IV}
%% NatureName	-> \tanm{Poet Laureate} -> Title after name
%% Degrees	-> \dgr{MSc, PhD}
%% \author*[1,2]{\pfx{Dr} \fnm{Joergen W.} \spfx{van der} \sur{Ploeg} \sfx{IV} \tanm{Poet Laureate} 
%%                 \dgr{MSc, PhD}}\email{iauthor@gmail.com}
%%=============================================================%%

\author{\fnm{Hayat} \sur{Bensella}} \email{hbensella@usthb.dz / bensella.hayet@gmail.com}

\affil{\orgdiv{Faculty of Mathematics, LATN Laboratory}, \orgname{USTHB}, \orgaddress{\street{BP 32, El Alia}, \city{Bab Azzouar}, \postcode{16111}, \state{Algiers}, \country{Algeria}}}

\abstract{We study integers of the form

\[
N=(d+1)(3d+1)(4d+1).
\]
Using Korselt's criterion, we prove that, provided the three
factors are prime, $N$ is a Carmichael number if and only if
\[
d\equiv4\pmod{12}.
\]

We also show that the special case $d=2^m$ produces only
$1105$ under the prime-factor hypothesis.
}

\keywords{Carmichael numbers, Korselt's criterion,
Prime numbers}

\maketitle

\section{Introduction}
A Carmichael number is a composite positive integer $N$ satisfying
\[
a^{N-1}\equiv 1\pmod N
\]
for every integer $a$ such that $\gcd(a,N)=1$. A fundamental
characterization of these numbers is given by Korselt's criterion \cite{Korselt1899}:
a composite integer $N$ is a Carmichael number if and only if
$N$ is squarefree and
\[
p-1\mid N-1
\]
for every prime divisor $p$ of $N$.

The study of Carmichael numbers has a long history. In particular,
Alford, Granville, and Pomerance proved that infinitely many
Carmichael numbers exist~\cite{Alford1994}.
This breakthrough stimulated further investigations into their
distribution, arithmetic structure, and occurrence in special
families of integers. Banks, Finch, Luca, Pomerance, and
St\u{a}nic\u{a} studied connections between Sierpiński and
Carmichael numbers~\cite{Banks2015}, while Cilleruelo, Luca, and
Pizarro investigated Carmichael numbers in sequences of the form
$(2^n k+1)_{n\geq1}$~\cite{Cilleruelo2016}. 
Other works have explored related questions concerning the density and
distribution of arithmetic sets \cite{Erdos1979}, the
distribution of primes in short intervals
\cite{Granville2015} and the impossibility of certain
types of Carmichael numbers \cite{Wright2012}.
An important classical construction of Carmichael numbers is due to Chernick, who considered integers of the form
\[
N=(6k+1)(12k+1)(18k+1).
\]
Whenever the three factors are prime, their product is a Carmichael number. This construction illustrates how Korselt's criterion can be used to identify Carmichael numbers through products of linear polynomials. Motivated by this approach, we consider the family
\[
N=(d+1)(3d+1)(4d+1),
\]
our main result gives a necessary and sufficient condition for
this product to be a Carmichael number, provided that all three
factors are prime. Specifically, we prove that
\[
N\text{ is a Carmichael number}
\quad\Longleftrightarrow\quad
d\equiv4\pmod{12}.
\]
We then consider the special case $d=2^m$ and show that, under
the same primality assumption, the only Carmichael number arising
from this specialization is $1105$.

\section{Main theorem and proof}

\begin{theorem}\label{thm:main}
Let $d$ be a positive even integer such that
\[
d+1,\qquad 3d+1,\qquad 4d+1
\]
are all prime. Then
\[
N=(d+1)(3d+1)(4d+1)
\]
is a Carmichael number if and only if
\[
d\equiv4\pmod{12}.
\]
\end{theorem}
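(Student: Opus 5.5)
The plan is to apply Korselt's criterion directly, using the explicit factorization of $N-1$. Since $d\ge 2$, the three primes $d+1<3d+1<4d+1$ are distinct. Hence $N$ is composite and squarefree, and by Korselt's criterion $N$ is a Carmichael number if and only if
\[
d\mid N-1,\qquad 3d\mid N-1,\qquad 4d\mid N-1 .
\]

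First I will expand $N$:
\[
N=(3d^2+4d+1)(4d+1)=12d^3+19d^2+8d+1,
\]
so that
\[
N-1=d\,(12d^2+19d+8).
\]
The condition $d\mid N-1$ then holds automatically. The other two conditions reduce to
\[
3\mid 12d^2+19d+8 \quad\text{and}\quad 4\mid 12d^2+19d+8 .
\]

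Next I reduce each condition by a short congruence computation.
\begin{itemize}
\item Modulo $3$: $12d^2+19d+8\equiv d+2$, so the first condition is equivalent to $d\equiv 1\pmod 3$.
\item Modulo $4$: $12d^2+19d+8\equiv 3d\pmod 4$. Since $3$ is invertible mod $4$, the second condition is equivalent to $d\equiv 0\pmod 4$.
\end{itemize}

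Finally, the Chinese remainder theorem combines $d\equiv 1\pmod 3$ and $d\equiv 0\pmod 4$ into the single condition $d\equiv 4\pmod{12}$. Every step above is an equivalence, so this gives both directions of the theorem at once.

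I do not expect any step to be a genuine obstacle; the only care needed is in the algebra. Two points deserve a check.
\begin{itemize}
\item The expansion of $N-1$ must be verified. For example, $d=4$ gives $N=5\cdot 13\cdot 17=1105$, and $1104=4\cdot 276$ with $276=12\cdot 16+19\cdot 4+8$.
\item The distinctness of the prime factors, which is needed for squarefreeness, must hold. It does, since $d>0$.
\end{itemize}
The evenness hypothesis on $d$ is not actually used, beyond being implied by the primality of $d+1$ for $d>1$.
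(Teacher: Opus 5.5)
Your proof is correct and follows the paper's argument: you factor $N-1=d(12d^2+19d+8)$, reduce the Korselt conditions $3d\mid N-1$ and $4d\mid N-1$ to $d\equiv 1\pmod 3$ and $d\equiv 0\pmod 4$, and combine them by the Chinese remainder theorem. The only difference is presentational: you chain equivalences to get both directions at once, whereas the paper proves the converse separately.
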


\begin{proof}
Suppose first that $N$ is a Carmichael number. By assumption,
the three factors
\[
p_1=d+1,\qquad p_2=3d+1,\qquad p_3=4d+1
\]
are prime. They are pairwise distinct, so $N$ is squarefree.

Expanding the product, we obtain
\[
\begin{aligned}
N
&=(d+1)(3d+1)(4d+1)\\
&=12d^3+19d^2+8d+1.
\end{aligned}
\]
Therefore,
\[
N-1=d(12d^2+19d+8).
\]

By Korselt's criterion,
\[
p_i-1\mid N-1,\qquad i=1,2,3.
\]

Since $p_1-1=d$, the first divisibility condition is automatic.

For the second prime factor, we have $p_2-1=3d$. Thus
\[
3d\mid N-1,
\]
which is equivalent to
\[
3\mid12d^2+19d+8.
\]
Reducing modulo $3$, we obtain
\[
d+2\equiv0\pmod3,
\]
and hence
\[
d\equiv1\pmod3.
\]

For the third prime factor, $p_3-1=4d$. Consequently,
\[
4d\mid N-1,
\]
which is equivalent to
\[
4\mid12d^2+19d+8.
\]
Reducing modulo $4$, we get
\[
3d\equiv0\pmod4.
\]
Since $\gcd(3,4)=1$, it follows that
\[
d\equiv0\pmod4.
\]

Combining
\[
d\equiv1\pmod3
\qquad\text{and}\qquad
d\equiv0\pmod4,
\]
the Chinese remainder theorem gives
\[
d\equiv4\pmod{12}.
\]

Conversely, suppose that $d\equiv4\pmod{12}$ and that all
three factors are prime. Then
\[
d\equiv1\pmod3,\qquad d\equiv0\pmod4.
\]
It follows that
\[
3\mid12d^2+19d+8
\]
and
\[
4\mid12d^2+19d+8.
\]
Therefore,
\[
d\mid N-1,\qquad
3d\mid N-1,\qquad
4d\mid N-1.
\]
Equivalently,
\[
p_i-1\mid N-1,\qquad i=1,2,3.
\]

The three prime factors are distinct, so $N$ is squarefree.
It is also composite. Korselt's criterion now implies that
$N$ is a Carmichael number.
\end{proof}

\subsection{Examples}

\begin{table}[htbp]
\centering
\caption{Examples of Carmichael numbers in the family
$N=(d+1)(3d+1)(4d+1)$.}
\label{tab:examples}
\begin{tabular}{c|c|c|c|c}
\hline
$d$ & $d+1$ & $3d+1$ & $4d+1$ & $N$ \\
\hline
$4$   & $5$    & $13$   & $17$   & $1105$ \\
$112$ & $113$  & $337$  & $449$  & $17098369$ \\
$400$ & $401$  & $1201$ & $1601$ & $771043201$ \\
\hline
\end{tabular}
\end{table}

\subsection{The special case $d=2^m$}

We now consider  
\[
d=2^m,\qquad m\geq1.
\]
In this case,
\[
N=(2^m+1)(3\cdot2^m+1)(2^{m+2}+1).
\]

\begin{corollary}\label{cor:power-two}
Let $m\geq1$ and suppose that
\[
2^m+1,\qquad
3\cdot2^m+1,\qquad
2^{m+2}+1
\]
are all prime. Then $N$ is a Carmichael number if and only if
$m=2$. In this case,
\[
N=1105.
\]
\end{corollary}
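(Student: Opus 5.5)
We apply Theorem~\ref{thm:main} with $d=2^m$. This $d$ is a positive even integer, and the primality hypotheses are exactly those of the corollary. So $N$ is a Carmichael number if and only if $2^m\equiv4\pmod{12}$. The proof therefore reduces to determining which $m\geq1$ satisfy this congruence, and then ruling out all but $m=2$ using the primality assumptions.

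First we treat the congruence. By the Chinese remainder theorem, $2^m\equiv4\pmod{12}$ is equivalent to $2^m\equiv0\pmod4$ and $2^m\equiv1\pmod3$. The first condition holds exactly when $m\geq2$. Since $2\equiv-1\pmod3$, the second holds exactly when $m$ is even. Hence $N$ is Carmichael if and only if $m$ is even and $m\geq2$.

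The main step is to show that the primality hypotheses rule out every even $m\geq4$. We use the standard fact that $2^k+1$ can be prime only when $k$ is a power of $2$. Indeed, if $k$ had an odd factor $r>1$, then $2^{k/r}+1$ would properly divide $2^k+1$. Applying this to the two factors $2^m+1$ and $2^{m+2}+1$, both $m$ and $m+2$ must be powers of $2$. Two powers of $2$ differing by $2$ must be $2$ and $4$, so $m=2$. (We have $m\geq1$, so $m+2\geq3$ and the pair $1,3$ cannot occur.) This argument forces $m=2$ directly, without even using the congruence analysis.

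Finally, for $m=2$ we have $d=4$, the three factors are $5$, $13$, $17$, all of them prime, and $4\equiv4\pmod{12}$. So $N=5\cdot13\cdot17=1105$ is a Carmichael number, in agreement with Table~\ref{tab:examples}. For the converse direction of the ``if and only if'': under the hypotheses, $m=2$ is the only possibility, and in that case $N=1105$ is Carmichael. The one point needing care is stating the argument that $2^k+1$ prime forces $k$ to be a power of $2$; this is the standard Fermat-number observation and can be quoted or proved in one line.
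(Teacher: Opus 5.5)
Your proof is correct, but it reaches $m=2$ by a different route from the paper's. The paper relies on Theorem~\ref{thm:main}: the congruence $2^m\equiv4\pmod{12}$ forces $m$ to be even. The split $m\equiv2$ versus $m\equiv0\pmod4$ then shows that $5$ is a proper divisor of $2^m+1$ or of $2^{m+2}+1$, respectively, once $m>2$. You instead apply the Fermat-number lemma ($2^k+1$ prime implies $k$ is a power of $2$) to both $k=m$ and $k=m+2$, which pins down $m=2$ from the primality hypotheses alone. As you observe, your congruence analysis then plays no role. The paper's mod-$5$ step is exactly the instance of your lemma with divisor $2^2+1$. It needs the evenness of $m$ from the Carmichael condition only because it does not treat odd $m$, where the divisor $2^1+1=3$ would do the job. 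Your version therefore proves something slightly stronger and makes the structure of the corollary transparent: the hypotheses hold only for $m=2$, so the ``if and only if'' reduces to checking that $1105$ is Carmichael. The paper's version is a hands-on computation with residues modulo $12$ and $5$ that keeps the Carmichael condition in the foreground. One small quibble: your parenthetical about the pair $1,3$ is muddled. That pair is excluded because $3$ is not a power of $2$, not because $m\geq1$. Your conclusion that the only powers of $2$ differing by $2$ are $2$ and $4$ is correct.
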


\begin{proof}
By Theorem~\ref{thm:main}, the Carmichael property is equivalent
to
\[
2^m\equiv4\pmod{12}.
\]
This congruence requires $m$ to be even and $m\geq2$.

Suppose that $m>2$. Since $m$ is even, either
\[
m\equiv2\pmod4
\qquad\text{or}\qquad
m\equiv0\pmod4.
\]

If $m\equiv2\pmod4$, then
\[
2^m\equiv4\pmod5,
\]
so
\[
5\mid2^m+1.
\]
Because $m>2$, the integer $2^m+1$ is greater than $5$,
and hence it is composite. This contradicts the hypothesis.

If $m\equiv0\pmod4$, then $m+2\equiv2\pmod4$, so
\[
2^{m+2}\equiv4\pmod5.
\]
Thus
\[
5\mid2^{m+2}+1.
\]
Again, this factor is greater than $5$ and is composite,
a contradiction.

Therefore $m=2$. Conversely, when $m=2$, the three factors
are $5$, $13$, and $17$, and their product $1105$ is a
Carmichael number by Theorem~\ref{thm:main}.
\end{proof}

\section{Further questions}

Theorem~\ref{thm:main} gives a complete characterization of
the Carmichael numbers in the family
\[
N=(d+1)(3d+1)(4d+1).
\]
The remaining arithmetic question is to determine how many
positive integers $d\equiv4\pmod{12}$ make all three numbers
\[
d+1,\qquad 3d+1,\qquad 4d+1
\]
simultaneously prime.

The theorem provides a necessary and sufficient condition for
each individual value of $d$, but it does not prove that
infinitely many such values exist. Establishing an infinitude
result, or obtaining further information about the distribution
of these prime triples, would require additional arguments.

\end{document}